\newcommand{\R}{\mathbb{R}}
\numberwithin{equation}{section}
\newtheorem{lemma}{Lemma}[section]
\newtheorem{proposition}{Proposition}[section]
\newtheorem{corollary}{Corollary}[section]
\title[Upper semicontinuity of global attractors]{Upper semicontinuity of global attractors for parabolic equations governed by the p-laplacian on unbounded thin domains}
\author[R. P. Silva]{Ricardo P. Silva}
\address[R. P. Silva]{Univ Estadual Paulista, Instituto de Geoci\^{e}ncias e Ci\^{e}ncias Exatas, Departamento de Matem\'atica, 13506-900, Rio Claro, SP, Brazil}
\email{rpsilva@rc.unesp.br}
\begin{document}

\maketitle

\begin{abstract}
We consider the asymptotic behavior of quasilinear parabolic equations posed in a family of unbounded domains that degenerates onto a lower dimensional set. Considering an au\-xi\-liary family of weighted Sobolev spaces we show the existence of global attractors and we analyze convergence properties of the solutions as well of the attractors.
\end{abstract}

\

{\footnotesize
\subjclass{{ Mathematics Subject Classifications: 35B25, 35B40, 35B41}

\keywords{Keywords: thin domains; p-laplacian; global attractors}}  }

\

\section{Introduction}\label{sec:introd}

The systematic study of the asymptotic behavior of dissipative systems on thin domains started with the works \cite{Hale:92, HR:92a} by J. Hale and G. Raugel. The cern of the study is guided by the question: Is it possible to give some information on the dynamics of an evolution equation defined in a spatial domain which is \emph{small} in some direction by mean a model on a lower dimensional spatial domain?  If such systems possess global attractors then is possible to compare the asymptotic behavior of two semiflows in terms of the Hausdorff distance of their respective attractors.

There is an extensive bibliography on thin domain problems especially devoted on the reaction-diffusion model
\begin{equation}\label{eq:prot-semi-lin}
\begin{array}{ll}
u_t - \Delta u + \lambda u = f(u), & \text{ in } \quad  (0,\infty) \times \Omega^\epsilon, \\

\hspace{0.2cm} \dfrac{\partial u}{\partial \eta_\epsilon} = 0, & \text{ on } \quad  (0,\infty) \times \partial \Omega^\epsilon,
\end{array}
\end{equation}
where $\Omega^\epsilon$ is a family of bounded domains collapsing onto a lower dimensional subset.

In \cite{Hale:92}, Hale and Raugel considered the case of domain of the form
$$
\Omega^\epsilon := \{(x, \epsilon y) \in \R^{n} \times \R : x \in \omega, \, 0<y< g(x) \},
$$
where $\omega$ is a bounded domain and $g$ is a smooth positive function defined on $\omega$. When $\epsilon$ is small, they compare the dynamics of \eqref{eq:prot-semi-lin} with the dynamics of the following equation defined in $\omega$
\begin{equation}\label{eq:hale-limit}
\begin{array}{ll}
u_t - \dfrac{1}{g}{\rm div}(g \nabla u)+ \lambda u = f(u), & \text{ in } \quad  (0,\infty) \times \omega, \\

\hspace{0.2cm} \dfrac{\partial u}{\partial \eta} = 0, & \text{ on } \quad  (0,\infty) \times \partial \omega.
\end{array}
\end{equation}
In particular they proof that the family of global attractors $\mathcal{A}_\epsilon$ associated to \eqref{eq:prot-semi-lin} is upper semicontinuous in $\epsilon=0$.

M. Prizzi and K. Rybakowski in \cite{Prizzi:01} treated a much more general class of thin domains, namelly
\begin{equation}\label{eq:domain-pri-rib}
\Omega^\epsilon := \{(x,\epsilon y) \in \R^{m} \times \R^n : (x,y) \in \omega \},
\end{equation}
where $\omega \subset \R^{m+n}$ is a bounded domain. They developed an abstract framework for the analysis of the problem \eqref{eq:prot-semi-lin} and they also shows the upper semicontinuity of the global attractors.

In \cite{Antoci:01}, F. Antonci and M. Prizzi allowed in \eqref{eq:domain-pri-rib} the domain $\omega$ to be an unbounded set. In such case the compactness of the semiflows is lost but inspired by \cite{Wang:99} the authors were able to show the existence of global attractors as well its upper semicontinuity.

Associated with boundary oscillation (rough boundary) on thin structures, J. Arrieta et al. in \cite{ACPS} consider 
$$
\Omega^\epsilon := \{(x, \epsilon y) \in \R \times \R : x \in (0,1), \ 0< y <  g(\epsilon^{-1} x))  \},
$$
where $g: \R \to \R$ is a $L$-periodic function. Combining methods from homogenization theory \cite{CP:80} the authors showed that the limiting equation is
$$
\begin{array}{ll}
u_t - r u_{xx} + \lambda u = f(u), & \text{ in } \quad  (0,\infty) \times (0,1), \\

u_x(t,0) = u_x(t,1) = 0, &  t>0,
\end{array}
$$
where $r>0$ is called the \emph{homogenized coefficient}. In particular the authors also show the upper semicontinuity of global attractors. For more references we refer the reader to the Montecatini lecture notes  \cite{Raugel:95} by G. Raugel.

Despite the study of the asymptotic behavior for semilinear models be widely considered in the literature, the same is not true for the quasi-linear case. Therefore, in this paper we consider an evolution equation governed by the $p$-laplacian operator as prototype of quasi-linear equations on an unbounded thin domain of the form 
$$
\Omega^\epsilon := \{(x, \epsilon y) \in \R^{n} \times \R : 0<y< g(x) \}.
$$

Considering in $\Omega^\epsilon$ the family of quasi-linear evolution equations
$$
\begin{array}{ll}
u_t -\Delta_p u + a(x,\epsilon y) |u|^{p-2} u = f(u), & \quad \hbox{in } (0,\infty) \times \Omega^\epsilon, \\
\dfrac{\partial u}{\partial \eta^\epsilon} = 0, &  \quad \hbox{on } (0,\infty) \times \partial \Omega^\epsilon,
\end{array}
$$
where $\Delta_p u:= {\rm div}(|\nabla u|^{p-2} \nabla u)$ denotes the $p$-Laplacian operator, $2<p<n$, we will compare the semiflow generated by them with the semiflow generated by the following equation (see \cite{S:13a})
$$
u_t -\dfrac{1}{g} {\rm div}(g |\nabla u|^{p-2} \nabla u)  + a(x,0) |u|^{p-2} u = f(u), \quad \hbox{in } (0,\infty) \times \R^n.
$$
Notice that in the case $p=2$ the structure of the main part of the limiting problem agrees with Hale's and Raugel's limiting problem \eqref{eq:hale-limit}. Our aim is to prove existence of global attractors $\mathcal{A}_\epsilon$ as well its upper semicontinuity in $\epsilon=0$.

For the best of our knowledge this is the first attempt to consider the asymptotic behavior of quasi-linear parabolic equations on unbounded thin domains. Such model is relevant in a variety of physical phenomena as non-Newtonian fluids and in flow through porous media.

\

The paper is organized as follows: In Section \ref{sec:setting} we describe the perturbed problems as well the nature of the limiting one. In Section \ref{sec:prel} we present the functional setting and the abstract formulation of the problem. In the Section \ref{sec:uniform} we prove uniform dissipation of the semigroups which leads to the existence of global attractors as well their upper-semicontinuity.


\section{Setting of the problem}\label{sec:setting}

Given a bounded function $g \in C^2 (\R^n; \R)$ satisfying $0 < \alpha_1 \le g(x) \le \alpha_2$, $\forall \, x \in \R^n$, and a positive parameter $\epsilon \in (0,1)$, we consider the family of \emph{unbounded thin domains}, $\Omega^\epsilon \subset \R^{n+1}$, defined by
\begin{equation}\label{eq:domain-pert-unbound}
\Omega^\epsilon := \{(x,y) \in \R^{n} \times \R : \, 0<y< \epsilon g(x) \}.
\end{equation}

In $\Omega^\epsilon$ we consider the family of  quasilinear parabolic equations
\begin{equation}\label{eq:prot-pert}
\begin{array}{ll}
u^\epsilon_t -\Delta_p u^\epsilon + a |u^\epsilon |^{p-2} u^\epsilon = f(u^\epsilon), & \quad \hbox{in } (0,\infty) \times \Omega^\epsilon, \\
 \dfrac{\partial u^\epsilon}{\partial \eta^\epsilon} = 0, &  \quad \hbox{on } (0,\infty) \times \partial \Omega^\epsilon, \\
 u^\epsilon(0, \cdot) = u^\epsilon_0(\cdot) \in L^2(\Omega^\epsilon),
\end{array}
\end{equation}
where $\Delta_p u:= {\rm div}(|\nabla u|^{p-2} \nabla u)$ denotes the $p$-Laplacian operator, $2<p<n$,    $\eta^\epsilon$ denotes the outward unitary normal vector field to $\partial \Omega^\epsilon$ and $f:\R \to \R$ is a globally Lipschitz function such that $f(0)=0$. We assume that $a:\R^{n} \times \R \to \R$ is an uniformly continuous function satisfying $a(x,y) \geq 1$ for all $(x,y) \in \R^{n}\times \R$, and
\begin{equation}\label{eq:a0}
\int_{\R^{n+1}} \frac{1}{a(x,y)^{\frac{2}{p-2}}}  dxdy < \infty.
\end{equation}

This technical hypothesis is necessary for the compactness embeddings of some weigthed Sobolev spaces which is essential in our approach to existence of global attractors for the problem \eqref{eq:prot-pert}.

In order to formulate the appropriate limiting regime for \eqref{eq:prot-pert}, we consider $\bar{a}:\R^n \to \R$ defined by $\bar{a}(x)= a(x,0)$ which also satisfies $\bar{a}(x) \geq 1$ for all $x \in \R^{n}$ and
$$
\int_{\R^{n}} \frac{1}{\bar{a}(x)^{\frac{2}{p-2}}}  dx < \infty.
$$

As indicated in \cite{S:13a}, the limiting equation should be
\begin{equation}\label{eq:limit-parab}
\begin{array}{ll}
u_t -\dfrac{1}{g} {\rm div}(g |\nabla u|^{p-2} \nabla u)  + \bar{a} |u|^{p-2} u = f(u), & \quad \hbox{in } (0,\infty) \times \R^n,   \\
u(0,\cdot) = u_0(\cdot) \in L^2(\R^n).
\end{array}
\end{equation}
%

Since we need to compare functions defined in $\Omega^\epsilon \subset \R^{n+1}$ with functions defined in $\R^n$ we will need some operators which will transform functions defined in $\R^n$ to functions defined in $\Omega^\epsilon$, as well operators which will transform functions defined in $\Omega^\epsilon$ to functions defined in $\R^n$.  Due to the nature of this specific kind of perturbation, is natural to consider the following operators
\begin{enumerate}
\item[] (\emph{Average projector})
\begin{equation}\label{eq:oper-restr}
\begin{array}{c}
M_\epsilon: L^p(\Omega^\epsilon) \to L^p(\R^n) \\
\displaystyle (M_\epsilon u)(x) = \frac{1}{\epsilon g(x)}\int_0^{\epsilon g(x)}  u(x,y) \, dy
\end{array}
\end{equation}
\item[] (\emph{Extension operator})
\begin{equation}\label{eq:oper-ext}
\begin{array}{c}
E_\epsilon: L^p(\R^n) \to L^p(\Omega^\epsilon) \\
(E_\epsilon u)(x,y) = u(x)
\end{array}
\end{equation}
\end{enumerate}
Notice that $M_\epsilon \circ E_\epsilon = I$, the identity operator in $L^p(\R^n)$. 

Furthermore the extension operator $E_\epsilon$ maps the family of spaces $W^{1,p}(\R^n)$ into $W^{1,p}(\Omega^\epsilon)$ and satisfies $\dfrac{\partial}{\partial y} (E_\epsilon u) = 0$.


\section{Functional Framework}\label{sec:prel}

In this section we recall definitions of suitable spaces and operators and some of their pro\-per\-ties. We start recalling that $\Omega^\epsilon \subset \R^{n+1}$ varies in accordance with the parameter $\epsilon$ collapsing themselves to lower dimension domain $\R^n$. Therefore, in order to preserve the ``relative capacity'' of a mensurable subset $\mathcal{O} \subset \Omega^{\epsilon}$, we rescale the Lebesgue measure by ${1}/{\epsilon}$, dealing with the singular measure $\rho_{\epsilon}(\mathcal{O})=\epsilon^{-1} | \mathcal{O} |$. With this measure we introduce the Lebesgue $L^p(\Omega^\epsilon; \rho_{\epsilon})$ and the Sobolev $W^{1,p}(\Omega^\epsilon;\rho_{\epsilon})$ spaces. The norms in these spaces will be denoted by $||| \cdot |||_{L^p(\Omega^\epsilon)}$ and $||| \cdot |||_{W^{1,p}(\Omega^\epsilon)}$ respectively and they are related with the usual ones by
$$
\begin{gathered}
||| u |||_{L^p(\Omega^\epsilon)} = \epsilon^{-1/p} \| u \|_{L^p(\Omega^\epsilon)}, \quad \forall \, u \in L^p(\Omega^\epsilon) \\
||| u |||_{W^{1,p}(\Omega^\epsilon)} = \epsilon^{-1/p} \| u \|_{W^{1,p}(\Omega^\epsilon)}, \quad \forall \,u \in W^{1,p}(\Omega^\epsilon).
\end{gathered}
$$
In the particular case $p=2$, we will denote the inner product as $\displaystyle \langle u,v \rangle_\epsilon:= \epsilon^{-1} \int_{\Omega^\epsilon} uv \,dxdy$.

In the limiting case, we consider the Lebesgue and Sobolev spaces $L^p(\R^n)$ and $W^{1,p}(\R^n)$ endowed with equivalent norms $\displaystyle |||u |||_{L^p(\R^n)} := \left[ \int_{\R^n} g|u|^p dx \right]^{\frac1p}$ and  $\displaystyle |||u |||_{W^{1,p}(\R^n)} := \left[ \int_{\R^n} g( |\nabla u|^p + |u|^p) dx\right]^{\frac1p}$ res\-pec\-tively.

With this definitions, it is easy to see from Fubini-Tonelli Theorem and Hölder inequality that the operators $M_\epsilon: L^p(\Omega^\epsilon) \to L^p(\R^n)$ satisfy $\| M_\epsilon \|_{\mathcal{L}(L^p(\Omega^\epsilon;\rho_{\epsilon}), L^p(\R^n))} = 1$. In fact, let $u \in L^p(\Omega^\epsilon)$,
\begin{align*}
|||M_\epsilon u |||_{L^p(\R^n)} & =  \left[ \int_{\R^n} g(x)|M_\epsilon u(x)|^p dx \right]^{\frac1p} \\
& =  \left[ \int_{\R^n} \frac{1}{\epsilon^p g^{p-1}(x)}\left|\int_0^{\epsilon g(x)} u(x,y) dy\right|^p dx \right]^{\frac1p} \\
& \le  \left[ \int_{\R^n} \frac{1}{\epsilon^p g^{p-1}(x)} (\epsilon g(x))^{p-1} \int_0^{\epsilon g(x)} |u(x,y)|^p dydx \right]^{\frac1p} \\
& = \epsilon^{-1/p}\left[  \int_{\Omega^\epsilon} |u(x,y)|^p dxdy  \right]^{\frac1p} = |||u |||_{L^p(\Omega^\epsilon)}. 
\end{align*}
The equality holds if $u$ is independent of $y$ in $\Omega^\epsilon$.

One of the major difficulties on the analysis of the asymptotic behavior of PDE's in unbounded domains, is the lack of compactness of the embeddings in some functional spaces, in our approach this will be fixed up introducing the family of weighted Sobolev spaces
$$ 
E_\epsilon = \{ u \in W^{1,p}(\Omega^\epsilon): \displaystyle \int_{\Omega^\epsilon} a  |u|^p \, dxdy  <~\infty \}
$$
endowed with norm $\| u \|_{E_\epsilon } = \left[\epsilon^{-1} \displaystyle \int_{\Omega^\epsilon} \left( |\nabla u|^p + a |u|^p  \right) \, dxdy \right]^\frac{1}{p} $. In the li\-mi\-ting case we consider
$$ 
E_0 = \{ u \in W^{1,p}(\R^n): \displaystyle \int_{\R^n} \bar{a}  |u|^p \, dx  < \infty \}
$$ 
endowed with norm $\| u \|_{E_0 } = \left[\displaystyle \int_{\R^n} g \left( |\nabla u|^p + \bar{a} |u|^p  \right) \, dxdy \right]^\frac{1}{p}$.

In the next Lemma we summarize some important properties of the family $E_\epsilon$.

\begin{lemma}\label{lemma:E}
The space $E_\epsilon$ is a reflexive Banach space. Furthermore $E_\epsilon \stackrel{d}{\hookrightarrow} L^r(\Omega^\epsilon)$, $2 \leq r \leq  \frac{p (n+1)}{(n+1)-p}$, and $E_\epsilon \subset \subset L^r(\Omega^\epsilon)$, $2 \leq r < \frac{p (n+1)}{(n+1)-p}$, with all embedding constants independent of $\epsilon$.

Similarly the space $E_0 $ is a reflexive Banach space, $E_0 \stackrel{d}{\hookrightarrow} L^r(\R^n)$, $2 \leq r \leq \frac{p n}{n - p}$, and $E_0 \subset \subset L^r(\R^n)$, $2 \leq r < \frac{p n}{n - p}$.
\end{lemma}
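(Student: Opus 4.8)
The plan is to treat $E_\epsilon$ as a closed subspace of a product-type space and then transfer the analysis to a fixed reference domain via the natural diffeomorphism $(x,y)\mapsto(x,\epsilon g(x)\cdot)$-type change of variables, so that all $\epsilon$-dependence is tracked through the weight. First I would establish that $E_\epsilon$ is a Banach space: the norm $\|\cdot\|_{E_\epsilon}$ comes from a uniformly convex functional (sum of $L^p$-norms of $\nabla u$ and of $a^{1/p}u$), so completeness follows from the completeness of $W^{1,p}(\Omega^\epsilon)$ and of the weighted space $L^p(\Omega^\epsilon; a\,dxdy)$ together with uniqueness of limits; reflexivity then follows because $E_\epsilon$ embeds isometrically (via $u\mapsto(\nabla u, a^{1/p}u)$) into $L^p(\Omega^\epsilon)^{n+1}\times L^p(\Omega^\epsilon)$, which is reflexive for $1<p<\infty$, and the image is closed, hence reflexive; the same argument gives reflexivity of $E_0$. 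Uniform convexity of these norms (from Clarkson's inequalities) could alternatively be invoked directly.

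Next I would handle the continuous embeddings $E_\epsilon \hookrightarrow L^r(\Omega^\epsilon)$ for $2\le r\le p(n+1)/((n+1)-p)$, with density and $\epsilon$-uniform constants. The Sobolev embedding $W^{1,p}(\Omega^\epsilon)\hookrightarrow L^{p^*}(\Omega^\epsilon)$, $p^*=p(n+1)/((n+1)-p)$, holds with a constant depending on $\Omega^\epsilon$; the point is to make it $\epsilon$-uniform. Since $\Omega^\epsilon$ is the region under the graph of $\epsilon g$ with $0<\alpha_1\le g\le\alpha_2$, the anisotropic scaling $T_\epsilon u(x,z)=u(x,\epsilon z)$ maps $\Omega^\epsilon$ onto the fixed domain $\Omega^1=\{0<z<g(x)\}$, and one computes that $\|\nabla_{x,z}(T_\epsilon u)\|$ and $\epsilon^{-1/p}\|\nabla_{x,y}u\|$ are comparable up to constants independent of $\epsilon$ (the $y$-derivative picks up a favorable factor of $\epsilon$, the $x$-derivatives are unchanged, and the measure rescaling is exactly the $\epsilon^{-1}$ built into $\|\cdot\|_{E_\epsilon}$). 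Hence the embedding constant for $\Omega^\epsilon$ is controlled by that for the fixed domain $\Omega^1$. The $L^2$-control (i.e. the lower endpoint $r=2$, which requires no a priori $L^2$-bound on unbounded $\Omega^\epsilon$) is where the weight $a$ enters: writing $\int|u|^2 = \int a^{-2/p}\cdot(a^{1/p}|u|)^2$ and applying Hölder with exponents $p/2$ and $p/(p-2)$ gives $\|u\|_{L^2}^2 \le \big(\int a^{-2/(p-2)}\big)^{(p-2)/p}\|a^{1/p}u\|_{L^p}^2$, and the finiteness hypothesis \eqref{eq:a0} makes this bound $\epsilon$-uniform (the integral over $\R^{n+1}$ dominates the integral over $\Omega^\epsilon$); interpolation between $L^2$ and $L^{p^*}$ covers intermediate $r$. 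Density of smooth functions is standard for $W^{1,p}(\Omega^\epsilon)$ and survives the weight by a truncation-and-mollification argument using $a\in L^\infty_{\mathrm{loc}}$.

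Finally, the compact embedding $E_\epsilon\subset\subset L^r(\Omega^\epsilon)$ for $2\le r< p^*$ is the crux, and I expect this to be the main obstacle because $\Omega^\epsilon$ is unbounded. The strategy is a tightness/truncation argument: given a bounded sequence $(u_k)$ in $E_\epsilon$, on each ball $B_R\cap\Omega^\epsilon$ the Rellich--Kondrachov theorem gives a subsequence converging in $L^r$; to control the tail $\{|x|>R\}$ one uses that $a(x,y)\to\infty$ as $|x|\to\infty$ along the region where $a^{-2/(p-2)}$ is integrable — more precisely, \eqref{eq:a0} forces, for any $\delta>0$, a radius $R$ with $\int_{\{|x|>R\}\cap\Omega^\epsilon}a^{-2/(p-2)}\,dxdy<\delta$, and then the Hölder estimate above applied on the tail gives $\int_{\{|x|>R\}}|u_k|^2\le \delta^{(p-2)/p}\|u_k\|_{E_\epsilon}^2$, uniformly small; a diagonal argument combining the two regions yields an $L^2$-convergent subsequence, and interpolation with the uniform $L^{p^*}$-bound upgrades this to $L^r$ for every $r<p^*$. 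The $\epsilon$-uniformity of all constants in this step comes from the scaling $T_\epsilon$ already used above, so one can run the entire compactness argument on the fixed domain $\Omega^1$ with an $\epsilon$-dependent but uniformly integrable weight $a_\epsilon(x,z)=a(x,\epsilon z)$, whose tail integrals are bounded by those of $a$ on all of $\R^{n+1}$. The limiting case $E_0\subset\subset L^r(\R^n)$ is proved identically, with $\bar a$ in place of $a$ and no scaling needed.
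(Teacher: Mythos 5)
The paper does not actually prove this lemma: it simply cites \cite[Lemma 2.1]{CSS:13}, so there is no internal argument to compare yours against. Your overall architecture (reflexivity via a closed isometric copy inside a product of $L^p$ spaces; the weighted H\"older estimate $\int|u|^2\le(\int a^{-2/(p-2)})^{(p-2)/p}(\int a|u|^p)^{2/p}$ for the $L^2$ endpoint; Sobolev embedding for the $L^{p^*}$ endpoint; interpolation in between; and a tightness-plus-Rellich argument for compactness) is the standard route and is essentially what the cited reference does for the $E_0$ case; the anisotropic rescaling $T_\epsilon u(x,z)=u(x,\epsilon z)$ onto the fixed layer $\{0<z<g(x)\}$ is the right device for making the Sobolev constant $\epsilon$-uniform, and your bookkeeping of the factors $\epsilon^{-1/p}$ there is correct.

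There is, however, one genuine gap: your justification of the $\epsilon$-uniformity of the $L^2$-embedding constant. In the rescaled norms one has $|||u|||_{L^2(\Omega^\epsilon)}^2=\epsilon^{-1}\|u\|_{L^2(\Omega^\epsilon)}^2$ while the relevant piece of $\|u\|_{E_\epsilon}^2$ is $\epsilon^{-2/p}\|a^{1/p}u\|_{L^p(\Omega^\epsilon)}^2$, so after H\"older the constant that must be bounded uniformly in $\epsilon$ is
\[
\Bigl(\epsilon^{-1}\int_{\Omega^\epsilon} a^{-\frac{2}{p-2}}\,dxdy\Bigr)^{\frac{p-2}{p}},
\]
not $\bigl(\int_{\Omega^\epsilon} a^{-2/(p-2)}\bigr)^{(p-2)/p}$. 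The observation that ``the integral over $\R^{n+1}$ dominates the integral over $\Omega^\epsilon$'' therefore only yields a constant of order $\epsilon^{-(p-2)/p}$, which blows up as $\epsilon\to0$. The estimate can be repaired, but not from \eqref{eq:a0} alone: one uses the uniform continuity of $a$ together with $a\ge1$ to get $a(x,y)^{-2/(p-2)}\le C\,\bar a(x)^{-2/(p-2)}$ for $0\le y\le\alpha_2$, whence $\epsilon^{-1}\int_{\Omega^\epsilon}a^{-2/(p-2)}\le C\alpha_2\int_{\R^n}\bar a^{-2/(p-2)}dx<\infty$, uniformly in $\epsilon$ --- i.e.\ the hypothesis on $\bar a$ stated in Section~\ref{sec:setting}, not \eqref{eq:a0}, is what carries this step. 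The same correction is needed in your tail estimate if you want the tightness bound to be uniform in $\epsilon$; for the compactness assertion at a fixed $\epsilon$ your version is fine. The remaining parts (reflexivity, density via $C_c^\infty(\Omega^\epsilon)\subset E_\epsilon$, interpolation, and the diagonal argument for compactness) are sound.
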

\begin{proof}
See \cite[Lemma 2.1]{CSS:13}.

\end{proof}

\subsection{Monotone operator}

In order to rewrite the problems \eqref{eq:prot-pert} and \eqref{eq:limit-parab} as evolution equations in the state space $L^2(\Omega^\epsilon)$ and $L^2(\R^n)$ respectively, we consider the following  operators 
$$
\begin{array}{c}
A_\epsilon : E_\epsilon \to E_\epsilon^* \\
\displaystyle \langle A_\epsilon u, v \rangle_{E^*_\epsilon,E_\epsilon} =  \int_{\Omega^\epsilon} \left(  |\nabla u|^{p-2} \nabla u \cdot \nabla v  + a  | u |^{p-2} u v \right) dxdy, \quad \forall \, v \in E_\epsilon,
\end{array}
$$
where $\langle \cdot, \cdot \rangle_{E^*_\epsilon,E_\epsilon} $ denotes the pair of duality between $E^*_\epsilon$ and $E_\epsilon$, and
$$
\begin{array}{c}
A_0 : E_0 \to E_0^* \\
\displaystyle \langle A_0 u, v \rangle_{E^*_0,E_0} = \int_{\R^n} \left(  |\nabla u|^{p-2} \nabla u \cdot \nabla v  + \bar{a}  | u |^{p-2} u v \right) dx, \quad \forall \, v \in E_0,
\end{array}
$$
where $\langle \cdot, \cdot \rangle_{E^*_0,E_0} $ denotes the pair of duality between $E^*_0$ and $E_0$.

By Tartar's inequality, \cite{Vrabie:87}, one can show that the operator $A_\epsilon$, $\epsilon \in [0,1)$  is monotone, hemicontinuous and coercive. Let us to consider the $L^r$-realization, $2\leq r \leq \frac{p (n+1)}{(n+1)-p}$, of the operator $A_\epsilon$, $\epsilon>0$, denoted by $A_{\epsilon,r}$, given by
\begin{eqnarray*}
D(A_{\epsilon,r}) & = &  \{ u \in E_\epsilon :A_{\epsilon} u \in L^r(\Omega^\epsilon)  \}, \\ 
A_{\epsilon,r} u & = & A_{\epsilon} u, \quad \forall \, u \in D(A_{\epsilon,r})
\end{eqnarray*}
as well the $L^r$-realization, $2\leq r \leq \frac{pn}{n-p}$, of the operator $A_0$, denoted by $A_{0,r}$, given by
\begin{eqnarray*}
D(A_{0,r}) & = &  \{ u \in E_0 :A_{0} u \in L^r(\R^n)  \}, \\ 
A_{0,r} u & = & A_{0} u, \quad \forall \, u \in D(A_{0,r}).
\end{eqnarray*}


For our purposes it is of special interest the case $r=2$, and for shorten notation, we will drop the index $r$ and we will write $A_\epsilon$ and $A_0$ for the respective realizations. 

We will also denote by $B_\epsilon:L^2(\Omega^\epsilon) \to L^2(\Omega^\epsilon)$ $(B: L^2(\R^n) \to L^2(\R^n)$ if $\epsilon =0)$ the Nemitskii operator associated to $f$. If $L$ is the Lipschitz constant of $f$,  it is easy to see that $B_\epsilon$ and  $B_0$ are Lipschitz maps with constant of Lipschitz $L$ and $B_\epsilon(0) =B(0)=0$.

With this framework the problems \eqref{eq:prot-pert} and \eqref{eq:limit-parab} can be written  respectively as
\begin{eqnarray}\label{eq:ev-abst}
u_t^\epsilon + A_\epsilon u^\epsilon  &=&  B_\epsilon(u^\epsilon), \quad t>0, \\
u^\epsilon(0)  &=&  u^\epsilon_0 \in L^2(\Omega^\epsilon), \nonumber
\end{eqnarray}
and
\begin{eqnarray}\label{eq:ev-abst-lim}
u_t + A_0 u  & = &  B(u), \quad t>0, \\
u(0) & = & u_0 \in L^2(\R^n). \nonumber
\end{eqnarray}

Solutions of these equations are obtained from the following Proposition

\begin{proposition}[\cite{Brezis:73}, Proposition 3.13]\label{teo:Brezis}
Given $u^\epsilon_0 \in L^2(\Omega^\epsilon)$ $(u_0 \in L^2(\R^n)$ if $\epsilon =0)$ there exists an unique solution $u^\epsilon =u^\epsilon(\cdot, u^\epsilon_0) \in W^{1,1}(0,\infty; L^2(\Omega^\epsilon))$ of \eqref{eq:ev-abst} $(u =u(\cdot, u_0) \in W^{1,1}(0,\infty; L^2(\R^n))$ of \eqref{eq:ev-abst-lim} if $\epsilon =0)$.
\end{proposition}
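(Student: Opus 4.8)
The plan is to recognize \eqref{eq:ev-abst}--\eqref{eq:ev-abst-lim} as Lipschitz perturbations of subgradient flows and then invoke the abstract theory; I describe the argument for $\epsilon>0$, the case $\epsilon=0$ being word-for-word the same with $\Omega^\epsilon$, $a$ and $\rho_\epsilon$ replaced by $\R^n$, $\bar a$ and the weighted measure $g\,dx$.

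First I would introduce the functional $\phi_\epsilon:L^2(\Omega^\epsilon)\to(-\infty,+\infty]$ defined by $\phi_\epsilon(u)=\frac1p\int_{\Omega^\epsilon}(|\nabla u|^p+a|u|^p)\,dxdy$ for $u\in E_\epsilon$ and $\phi_\epsilon(u)=+\infty$ otherwise, and check that it is proper, convex and lower semicontinuous on $L^2(\Omega^\epsilon)$: convexity is inherited from $t\mapsto|t|^p$ and $\xi\mapsto|\xi|^p$, while lower semicontinuity follows because a sequence $u_k\to u$ in $L^2$ with $\liminf_k\phi_\epsilon(u_k)<\infty$ is bounded in the reflexive space $E_\epsilon$ (Lemma \ref{lemma:E}), hence has a subsequence converging weakly in $E_\epsilon$, necessarily to $u$, along which $\phi_\epsilon$ is weakly lower semicontinuous by convexity. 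A direct computation of the G\^ateaux derivative of $\phi_\epsilon$ on $E_\epsilon$ returns exactly the form defining $A_\epsilon$, and the monotonicity of the integrands (Tartar's inequality, already invoked above) then lets one identify the $L^2$-subdifferential as $\partial\phi_\epsilon=A_{\epsilon,2}$ with domain $\{u\in E_\epsilon:A_\epsilon u\in L^2(\Omega^\epsilon)\}$. Since $E_\epsilon$ is dense in $L^2(\Omega^\epsilon)$ (again Lemma \ref{lemma:E}), $\partial\phi_\epsilon$ is maximal monotone with dense domain, and \eqref{eq:ev-abst} reads $u^\epsilon_t+\partial\phi_\epsilon(u^\epsilon)\ni B_\epsilon(u^\epsilon)$.

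Next I would run the standard fixed-point scheme for a Lipschitz perturbation of a subgradient flow. For $v\in C([0,T];L^2(\Omega^\epsilon))$ with $v(0)=u_0^\epsilon$ the map $t\mapsto B_\epsilon(v(t))$ lies in $L^2(0,T;L^2(\Omega^\epsilon))$, since $\|B_\epsilon(v(t))\|_{L^2}\le L\|v(t)\|_{L^2}$, so by the theory of \cite{Brezis:73} the problem $u_t+\partial\phi_\epsilon(u)\ni B_\epsilon(v)$, $u(0)=u_0^\epsilon$, has a unique strong solution $\Phi(v)\in C([0,T];L^2(\Omega^\epsilon))$ with $\sqrt{t}\,\Phi(v)_t\in L^2(0,T;L^2(\Omega^\epsilon))$. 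Monotonicity of $\partial\phi_\epsilon$ together with the Lipschitz bound gives
$$
\tfrac12\tfrac{d}{dt}\|\Phi(v_1)-\Phi(v_2)\|_{L^2}^2\le L\,\|v_1-v_2\|_{L^2}\,\|\Phi(v_1)-\Phi(v_2)\|_{L^2},
$$
so by Gronwall's inequality $\Phi$ is a strict contraction on $C([0,\tau];L^2(\Omega^\epsilon))$ once $\tau$ is small depending only on $L$; its fixed point is the unique solution of \eqref{eq:ev-abst} on $[0,\tau]$. To globalize I would test \eqref{eq:ev-abst} with $u^\epsilon$: using $\langle A_\epsilon u^\epsilon,u^\epsilon\rangle\ge0$ and $\langle B_\epsilon(u^\epsilon),u^\epsilon\rangle\le L\|u^\epsilon\|_{L^2}^2$ one obtains $\|u^\epsilon(t)\|_{L^2}^2\le\|u_0^\epsilon\|_{L^2}^2e^{2Lt}$, which rules out finite-time blow-up and lets one iterate the local construction over $[0,\infty)$, with uniqueness propagating by the same Gronwall estimate. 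The regularity $u^\epsilon\in W^{1,1}(0,\infty;L^2(\Omega^\epsilon))$ is then part of the strong-solution conclusion of \cite[Proposition 3.13]{Brezis:73} (indeed $u^\epsilon_t\in L^1(0,T;L^2(\Omega^\epsilon))$ for every $T$, with $\sqrt{t}\,u^\epsilon_t\in L^2$).

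I expect the only genuinely non-routine step to be the identification in the second paragraph, namely that the $L^2$-realization $A_{\epsilon,2}$ coincides with $\partial\phi_\epsilon$ and is therefore maximal monotone on $L^2(\Omega^\epsilon)$; this rests on the $L^2$-lower semicontinuity of $\phi_\epsilon$ and on the density and reflexivity of $E_\epsilon$, which is exactly where hypothesis \eqref{eq:a0} and the weighted-space properties collected in Lemma \ref{lemma:E} enter. The fixed-point argument, the a priori bound and the continuation are then routine consequences of $f$ being globally Lipschitz with $f(0)=0$.
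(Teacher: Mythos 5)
The paper does not prove this proposition at all: it is quoted verbatim as \cite[Proposition 3.13]{Brezis:73}, so there is no in-paper argument to compare against. Your proposal is a correct reconstruction of the standard argument that underlies that citation --- realizing $A_{\epsilon,2}$ as the $L^2$-subdifferential of the proper, convex, lower semicontinuous functional $\phi_\epsilon$, solving the frozen-coefficient problem by Brezis's theory for subgradient flows, and removing the freezing by a Banach fixed point plus Gronwall, with the a priori $L^2$ bound giving global existence. Two places are glossed over and worth making explicit if you were to write this out: (i) the identification $\partial\phi_\epsilon=A_{\epsilon,2}$ requires not only that the G\^ateaux derivative of $\phi_\epsilon$ on $E_\epsilon$ is $A_\epsilon$ (which gives $A_{\epsilon,2}\subset\partial\phi_\epsilon$) but also a maximality argument, e.g.\ surjectivity of $I+A_\epsilon$ from $E_\epsilon$ onto $E_\epsilon^*$ via Browder--Minty using the monotonicity, hemicontinuity and coercivity the paper records; and (ii) your parenthetical derivation of $u^\epsilon_t\in L^1(0,T;L^2)$ from $\sqrt{t}\,u^\epsilon_t\in L^2(0,T;L^2)$ does not follow by Cauchy--Schwarz, since $\int_0^T t^{-1}\,dt$ diverges --- the $W^{1,1}$ regularity near $t=0$ for data merely in $L^2=\overline{D(\phi_\epsilon)}$ must be taken from the finer estimates in \cite{Brezis:73} itself, which is in effect what you (and the paper) do by citing Proposition 3.13. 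Neither point is a genuine gap in the sense of a failing step; they are simply the parts of the argument that the cited reference supplies.
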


%


\section{Uniform Dissipativness and existence of Attractors}\label{sec:uniform}

In this section we establish uniform bounds on solutions of the problem \eqref{eq:ev-abst} and \eqref{eq:ev-abst-lim}. We will make use of a variation of the Gronwall's Inequality \cite{Teman:88}.

\begin{lemma}\label{lemma:l2bound} 
If $u^\epsilon(\cdot, u^\epsilon_0) \in C([0, \infty), L^2(\Omega^\epsilon))$ is the global solution of the problem \eqref{eq:ev-abst}, then there exist positive constants $T$ and $\beta$ $($not dependent on $\epsilon$$)$, such that
$$
||| u^\epsilon(t, u^\epsilon_0) |||_{L^2 (\Omega^\epsilon)} \leq \beta, \quad \forall \; t \geq T .
$$
\end{lemma}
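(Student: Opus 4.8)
The plan is to reduce the estimate to a scalar differential inequality for $y(t):=|||u^\epsilon(t,u^\epsilon_0)|||_{L^2(\Omega^\epsilon)}^2$ and to close it with the superlinear absorption that $p>2$ provides, keeping every constant $\epsilon$-independent by means of Lemma \ref{lemma:E}. First I would pair \eqref{eq:ev-abst} with $u^\epsilon$ in the rescaled inner product $\langle\cdot,\cdot\rangle_\epsilon$. Since the solution from Proposition \ref{teo:Brezis} is a strong solution, $t\mapsto y(t)$ is absolutely continuous and for a.e.\ $t>0$
$$
\frac{1}{2}\,\frac{d}{dt}\,y(t) + \langle A_\epsilon u^\epsilon,u^\epsilon\rangle_\epsilon = \langle B_\epsilon(u^\epsilon),u^\epsilon\rangle_\epsilon .
$$
By the definitions of $A_\epsilon$ and of $\|\cdot\|_{E_\epsilon}$ one has $\langle A_\epsilon u^\epsilon,u^\epsilon\rangle_\epsilon=\epsilon^{-1}\int_{\Omega^\epsilon}(|\nabla u^\epsilon|^p+a|u^\epsilon|^p)\,dxdy=\|u^\epsilon\|_{E_\epsilon}^p$, while, as $f$ is globally Lipschitz with $f(0)=0$, the pointwise bound $|f(s)s|\le L s^2$ gives $\langle B_\epsilon(u^\epsilon),u^\epsilon\rangle_\epsilon\le L\,y(t)$. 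Hence
$$
\frac{d}{dt}\,y + 2\,\|u^\epsilon\|_{E_\epsilon}^p \le 2L\,y .
$$

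Next I would invoke Lemma \ref{lemma:E}, which (taking $r=2$, which is admissible because $p>2$) furnishes a constant $c_0>0$ independent of $\epsilon$ with $|||v|||_{L^2(\Omega^\epsilon)}\le c_0\|v\|_{E_\epsilon}$ for all $v\in E_\epsilon$; equivalently $\|u^\epsilon\|_{E_\epsilon}^p\ge c_0^{-p}\,y^{p/2}$. Because $p/2>1$, Young's inequality yields $2L\,y\le c_0^{-p}y^{p/2}+C_1$ with $C_1=C_1(L,p,c_0)$ independent of $\epsilon$, so that
$$
\frac{d}{dt}\,y + c_0^{-p}\,y^{p/2} \le C_1 .
$$

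It then remains to apply the Gronwall-type lemma from \cite{Teman:88}: a nonnegative absolutely continuous $y$ with $y'+\gamma y^{q}\le C$ and $q>1$ satisfies $y(t)\le (C/\gamma)^{1/q}+(\gamma(q-1)t)^{-1/(q-1)}$ for all $t>0$. With $\gamma=c_0^{-p}$, $q=p/2$ and $C=C_1$, and fixing any $T>0$ (say $T=1$), this gives $|||u^\epsilon(t,u^\epsilon_0)|||_{L^2(\Omega^\epsilon)}^2\le (C_1c_0^{p})^{2/p}+(c_0^{-p}\tfrac{p-2}{2}T)^{-2/(p-2)}=:\beta^2$ for every $t\ge T$, where $T$ and $\beta$ depend only on $L$, $p$ and $c_0$, hence are independent of $\epsilon$ (indeed of $u^\epsilon_0$ as well), which is the claim.

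I expect the one genuinely delicate point to be the first step: justifying the energy identity for the $W^{1,1}$-solution of Proposition \ref{teo:Brezis}, and in particular that $u^\epsilon(t)\in D(A_\epsilon)\subset E_\epsilon$ for a.e.\ $t$, so that the duality pairing collapses to $\|u^\epsilon(t)\|_{E_\epsilon}^p$ (via Euler's identity for the $p$-homogeneous functional whose subdifferential is $A_\epsilon$). Everything after that is the classical superlinear-dissipation argument, and its uniformity in $\epsilon$ rests solely on the $\epsilon$-independent embedding constant of Lemma \ref{lemma:E} --- which is, ultimately, where hypothesis \eqref{eq:a0} enters the picture.
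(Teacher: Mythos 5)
Your proposal is correct and follows essentially the same route as the paper: the energy identity from pairing with $u^\epsilon$, the $\epsilon$-uniform embedding $E_\epsilon\hookrightarrow L^2(\Omega^\epsilon)$ of Lemma \ref{lemma:E} to convert $\langle A_\epsilon u^\epsilon,u^\epsilon\rangle_\epsilon=\|u^\epsilon\|_{E_\epsilon}^p$ into a superlinear absorption term, Young's inequality to split the Lipschitz term, and the Ghidaglia-type lemma \cite[Ch.~III, Lemma~5.1]{Teman:88} to obtain an absorbing bound independent of the initial data. Your closing remark about justifying the energy identity for the $W^{1,1}$-solution is a fair caveat, but the paper takes that step for granted as well.
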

\begin{proof}
Taking the $\langle \cdot, \cdot \rangle_\epsilon$ inner product with $u^\epsilon$ in  \eqref{eq:ev-abst} we obtain
\begin{equation}\label{eq:intbypart}
\frac{1}{2} \frac{d}{dt} ||| u^\epsilon |||^2_{L^2(\Omega^\epsilon)}  + \langle A_\epsilon u^\epsilon, u^\epsilon \rangle_\epsilon = \langle B_\epsilon (u^\epsilon) - B_\epsilon(0), u^\epsilon \rangle_\epsilon.
\end{equation}
Therefore
\begin{equation*}
\frac{1}{2} \frac{d}{dt} ||| u^\epsilon |||^2_{L^2(\Omega^\epsilon)}  +  c ||| u^\epsilon |||^p_{L^2(\Omega^\epsilon)} \leq L ||| u^\epsilon |||^2_{L^2(\Omega^\epsilon)}, 
\end{equation*}
where $c^{-1} > 0$ is the embedding constant in the Lemma \ref{lemma:E}.

Taking $\theta = \frac{p}{2}$, it follows from Young's inequality that for all $\eta > 0$,
\begin{eqnarray*}
\frac{1}{2} \frac{d}{dt} ||| u^\epsilon |||^2_{L^2(\Omega^\epsilon)}  +  c ||| u^\epsilon |||^p_{L^2(\Omega^\epsilon)} \leq \frac{1}{\theta'}\left( \frac{L}{\eta} \right)^{\theta'} +  \frac{\eta^\theta}{\theta} ||| u^\epsilon |||^p_{L^2(\Omega^\epsilon)}, 
\end{eqnarray*}
where $\theta' = \frac{p}{p-2}$. Choosing $\eta > 0$ satisfying $\gamma = c - \frac{\eta^\theta}{\theta}  > 0$, we have by \cite[Chapter III, Lemma 5.1]{Teman:88} that
$$
\frac{1}{2} ||| u^\epsilon |||^2_{L^2(\Omega^\epsilon)} \leq \left( \frac{ \delta}{\gamma} \right)^{\frac2p} + \left[ \frac{\gamma}{2}(p-2)t\right]^{\frac{-2}{p-2}},
$$
where $ \delta = \frac{1}{\theta'}\left( \frac{L}{\eta} \right)^{\theta'}$. Taking $T > 0$ in order to $\left[ \frac{\gamma}{2}(p-2)T\right]^{\frac{-2}{p-2}} \leq 1$,
we have for $ t \geq T$ that
$$
\frac{1}{2} ||| u^\epsilon |||^2_{L^2(\Omega^\epsilon)} \leq\left( \frac{ \delta}{\gamma} \right)^{\frac2p}  +1 : = \beta.
$$

\end{proof}

\begin{lemma}\label{lemma:Ebound}
If $u^\epsilon(\cdot, u^\epsilon_0) \in C([0, \infty), L^2(\Omega^\epsilon))$ is the global solution of \eqref{eq:ev-abst}, then there exist positive constants $T_1$ and $\beta_1$ $($not dependent on $\epsilon$$)$, such that
$$
\| u^\epsilon(t, u^\epsilon_0) \|_{E_\epsilon} \leq \beta_1, \quad \forall \; t \geq T_1 .
$$
\end{lemma}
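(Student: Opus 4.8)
The plan is to derive a uniform bound on $\|u^\epsilon(t,u^\epsilon_0)\|_{E_\epsilon}$ by improving the energy estimate from Lemma~\ref{lemma:l2bound} from the $L^2$-level to the $E_\epsilon$-level, i.e.\ by testing the equation against the time derivative rather than against $u^\epsilon$ itself. Concretely, I would take the $\langle\cdot,\cdot\rangle_\epsilon$ inner product of \eqref{eq:ev-abst} with $u^\epsilon_t$. The key observation is that the monotone operator $A_\epsilon$ is the Gateaux derivative of the convex functional
$$
J_\epsilon(u) := \frac1p \,\epsilon^{-1}\int_{\Omega^\epsilon}\bigl(|\nabla u|^p + a|u|^p\bigr)\,dxdy = \frac1p\|u\|_{E_\epsilon}^p,
$$
so that $\langle A_\epsilon u^\epsilon, u^\epsilon_t\rangle_\epsilon = \frac{d}{dt}J_\epsilon(u^\epsilon)$. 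This yields the identity
$$
|||u^\epsilon_t|||_{L^2(\Omega^\epsilon)}^2 + \frac1p\frac{d}{dt}\|u^\epsilon\|_{E_\epsilon}^p = \langle B_\epsilon(u^\epsilon), u^\epsilon_t\rangle_\epsilon \le \tfrac12 |||u^\epsilon_t|||_{L^2(\Omega^\epsilon)}^2 + \tfrac12 L^2 |||u^\epsilon|||_{L^2(\Omega^\epsilon)}^2,
$$
using that $B_\epsilon$ is globally Lipschitz with constant $L$ and $B_\epsilon(0)=0$. Hence $\frac{d}{dt}\|u^\epsilon\|_{E_\epsilon}^p \le \frac{p}{2}L^2|||u^\epsilon|||_{L^2(\Omega^\epsilon)}^2$, and for $t \ge T$ the right-hand side is bounded by $\frac{p}{2}L^2\cdot 2\beta = p L^2\beta$ by Lemma~\ref{lemma:l2bound}.

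Next I would convert this differential inequality into a pointwise bound by the usual uniform Gronwall / averaging argument. From $\frac{d}{dt}\|u^\epsilon\|_{E_\epsilon}^p \le pL^2\beta$ on $[T,\infty)$ one cannot integrate forward indefinitely, so instead I integrate over a unit window: for any $t \ge T$,
$$
\int_t^{t+1}\Bigl(|||u^\epsilon_s|||_{L^2(\Omega^\epsilon)}^2 + \tfrac1p\tfrac{d}{ds}\|u^\epsilon\|_{E_\epsilon}^p\Bigr)\,ds \le \tfrac12 L^2\!\int_t^{t+1}\!\!|||u^\epsilon|||_{L^2(\Omega^\epsilon)}^2\,ds \le L^2\beta,
$$
which gives in particular a uniform bound $\int_t^{t+1}\|u^\epsilon\|_{E_\epsilon}^p\,ds$: indeed, reinstating the full energy identity obtained by testing with $u^\epsilon$ (as in Lemma~\ref{lemma:l2bound}) and integrating over $[t,t+1]$ yields $\int_t^{t+1} c|||u^\epsilon|||_{L^2(\Omega^\epsilon)}^p\,ds \le \frac12|||u^\epsilon(t)|||_{L^2}^2 + L\int_t^{t+1}|||u^\epsilon|||_{L^2}^2\,ds \le \beta + 2L\beta$, and combining with coercivity of $A_\epsilon$ (monotone, coercive, bounded below by the $E_\epsilon$-norm to the $p$) one controls $\int_t^{t+1}\|u^\epsilon\|_{E_\epsilon}^p\,ds$ by a constant $a_1$ independent of $\epsilon$ and of $t \ge T$. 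Then the uniform Gronwall lemma \cite[Chapter III, Lemma 1.1]{Teman:88}, applied to $y(t) = \|u^\epsilon(t)\|_{E_\epsilon}^p$ with $\frac{d}{dt}y \le g(t)$ where $\int_t^{t+1}g \le a_2$ and $\int_t^{t+1}y \le a_1$, produces $y(t+1) \le a_1 + a_2$ for all $t \ge T$; setting $T_1 = T+1$ and $\beta_1 = (a_1+a_2)^{1/p}$ finishes the proof.

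The main obstacle is a regularity issue: Proposition~\ref{teo:Brezis} only gives $u^\epsilon \in W^{1,1}(0,\infty;L^2(\Omega^\epsilon))$, so $u^\epsilon_t(t)$ is merely an $L^2$-valued $L^1_{loc}$ function and the formal computation $\langle A_\epsilon u^\epsilon, u^\epsilon_t\rangle_\epsilon = \frac{d}{dt}J_\epsilon(u^\epsilon)$ needs justification; this is the standard difficulty in energy estimates for gradient-flow type equations with nonsmooth data. The clean way around it is to invoke the regularizing property of the semigroup generated by the maximal monotone operator $A_\epsilon - B_\epsilon$: by Br\'ezis' theory \cite{Brezis:73} the orbit instantly enters $D(A_{\epsilon})$ and $t \mapsto J_\epsilon(u^\epsilon(t))$ is absolutely continuous on $[\tau,\infty)$ for every $\tau>0$ with $\frac{d}{dt}J_\epsilon(u^\epsilon(t)) = \langle A_\epsilon u^\epsilon(t), u^\epsilon_t(t)\rangle_\epsilon$ a.e., and $\sqrt{t}\,u^\epsilon_t \in L^\infty_{loc}$; alternatively one runs the computation on Yosida/Galerkin approximations and passes to the limit using lower semicontinuity of $J_\epsilon$ and the uniform-in-$\epsilon$ embedding constants from Lemma~\ref{lemma:E}. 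Apart from this point, every constant produced is manifestly independent of $\epsilon$ because the only structural inputs are the Lipschitz constant $L$, the $\epsilon$-independent embedding constant $c$ of Lemma~\ref{lemma:E}, and the $\epsilon$-independent $\beta$ of Lemma~\ref{lemma:l2bound}.
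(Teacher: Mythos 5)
Your proof follows essentially the same route as the paper's: test the equation against $u^\epsilon_t$ to obtain a differential inequality for $\|u^\epsilon\|_{E_\epsilon}^p$, integrate the identity \eqref{eq:intbypart} over a time window to bound $\int_t^{t+R}\|u^\epsilon(s)\|_{E_\epsilon}^p\,ds$, and conclude with the uniform Gronwall lemma. The differences are cosmetic (you bound the right-hand side via the $L^2$ absorbing estimate of Lemma \ref{lemma:l2bound} where the paper invokes Young's inequality to absorb it into the $E_\epsilon$-norm), and your remark on justifying the chain rule $\frac{d}{dt}J_\epsilon(u^\epsilon)=\langle A_\epsilon u^\epsilon,u^\epsilon_t\rangle_\epsilon$ addresses a regularity point the paper leaves implicit.
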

\begin{proof}
Taking the $\langle \cdot, \cdot \rangle_\epsilon$ inner product with $u_t^\epsilon$ in \eqref{eq:ev-abst}  we obtain from Young's inequality that
$$ 
\frac12 ||| u_t^\epsilon |||^2_{L^2(\Omega^\epsilon)} + \frac{1}{p} \frac{d}{dt} \| u^\epsilon \|_{E_\epsilon}^p  \leq \frac{1}{2} L^2 ||| u^\epsilon |||^2_{L^2(\Omega^\epsilon)},
$$
and consequently, for $\theta= \frac{p}{2}$, we obtain
\begin{equation}\label{eq:limbound} 
\frac{2}{p} \frac{d}{dt} \| u^\epsilon \|_{E_\epsilon}^p \leq L^2 ||| u^\epsilon |||_{L^2(\Omega^\epsilon)}^2  \leq \frac{1}{\theta'} L^{2 \theta'} + \frac{ 1 }{\theta} \| u^\epsilon \|_{E_\epsilon}^p .
\end{equation}

Recalling \eqref{eq:intbypart} we have by integrating from $t$ to $t+R$ that
$$
\int_{t}^{t+R} \|u^\epsilon(s) \|^p_{E_\epsilon} \, ds \le \frac{1}{2} |||u^\epsilon(t)|||^2_{L^2(\Omega^\epsilon)} + L \int_t^{t+R} |||u^\epsilon(s)|||^2_{L^2(\Omega^\epsilon)} ds \le a_3.
$$

Setting $a_1= \frac{ R p }{2 \theta}$, $a_2 =  \frac{Rp}{2 \theta'} L^{2 \theta'}$ we obtain from  \eqref{eq:limbound} thanks to \cite[Chapter III, Lemma 1.1]{Teman:88}, that

%
%

\begin{equation}
\| u^\epsilon (t+ R) \|^p_{E_\epsilon} \leq \left( \frac{a_3}{R} + a_2 \right) \, e^{a_1}:= \beta_2 , \quad t \geq 0 .
\end{equation}

Choosing $R = T_1$ we have for $t \geq T_1$ that
$$
\| u^\epsilon (t) \|_{E_\epsilon} \leq \beta^{1/p}_2 :=\beta_1.
$$

\end{proof}

\begin{lemma}\label{lemma:Ebound-lim}
If $u(\cdot, u_0) \in C([0, \infty), L^2(\R^n))$ is the global solution of \eqref{eq:ev-abst-lim}, then there exist positive constants $T_1$ and $\beta_1$ such that
$$
\| u(t, u_0) \|_{E_0} \leq \beta_1, \quad \forall \; t \geq T_1.
$$
\end{lemma}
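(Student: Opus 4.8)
The plan is to reproduce the argument of Lemma~\ref{lemma:Ebound} verbatim in the limiting setting, since the abstract structure of \eqref{eq:ev-abst-lim} is identical to that of \eqref{eq:ev-abst}: the operator $A_0$ is monotone, hemicontinuous and coercive with the same coercivity constant built from the embedding $E_0 \subset\subset L^2(\R^n)$ of Lemma~\ref{lemma:E}, and $B_0$ is globally Lipschitz with constant $L$ and $B_0(0)=0$. First I would note that Lemma~\ref{lemma:l2bound} has an exact analogue for \eqref{eq:ev-abst-lim}: taking the $L^2(\R^n)$ inner product (with the weight $g$) of the equation with $u$, using coercivity of $A_0$ to bound $\langle A_0 u, u\rangle_0 \geq c\,|||u|||_{L^2(\R^n)}^p$ and the Lipschitz bound on the right, one gets the same differential inequality
$$
\frac12 \frac{d}{dt} |||u|||_{L^2(\R^n)}^2 + c\,|||u|||_{L^2(\R^n)}^p \leq L\,|||u|||_{L^2(\R^n)}^2,
$$
and hence, by Young's inequality and \cite[Chapter III, Lemma 5.1]{Teman:88}, a bound $|||u(t,u_0)|||_{L^2(\R^n)} \leq \beta$ for all $t \geq T$ with $T,\beta$ independent of the initial data.

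Next, following Lemma~\ref{lemma:Ebound}, I would take the $L^2(\R^n)$ inner product of \eqref{eq:ev-abst-lim} with $u_t$. Since $\langle A_0 u, u_t\rangle_0 = \frac1p \frac{d}{dt}\|u\|_{E_0}^p$ (the operator $A_0$ is the Gateaux derivative of $\frac1p\|\cdot\|_{E_0}^p$), Young's inequality gives
$$
\frac12 |||u_t|||_{L^2(\R^n)}^2 + \frac1p \frac{d}{dt}\|u\|_{E_0}^p \leq \frac12 L^2 |||u|||_{L^2(\R^n)}^2,
$$
and then, with $\theta = p/2$, $\theta' = p/(p-2)$,
$$
\frac2p \frac{d}{dt}\|u\|_{E_0}^p \leq L^2 |||u|||_{L^2(\R^n)}^2 \leq \frac{1}{\theta'}L^{2\theta'} + \frac1\theta \|u\|_{E_0}^p.
$$
Integrating the energy identity (the analogue of \eqref{eq:intbypart}) from $t$ to $t+R$ and using the $L^2$ bound from the first step yields $\int_t^{t+R}\|u(s)\|_{E_0}^p\,ds \leq a_3$ with $a_3$ independent of $u_0$, for $t$ large. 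Then the uniform Gronwall lemma \cite[Chapter III, Lemma 1.1]{Teman:88}, applied exactly as in Lemma~\ref{lemma:Ebound} with $a_1 = Rp/(2\theta)$, $a_2 = (Rp/2\theta')L^{2\theta'}$, gives $\|u(t+R)\|_{E_0}^p \leq (a_3/R + a_2)e^{a_1}$, and choosing $R = T_1$ produces the desired bound $\|u(t,u_0)\|_{E_0}\leq \beta_1$ for all $t \geq T_1$.

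There is no real obstacle here: the proof is a line-by-line transcription, with $\Omega^\epsilon$ replaced by $\R^n$, $|||\cdot|||_{L^2(\Omega^\epsilon)}$ by $|||\cdot|||_{L^2(\R^n)}$, $\|\cdot\|_{E_\epsilon}$ by $\|\cdot\|_{E_0}$, $A_\epsilon$ by $A_0$, and $B_\epsilon$ by $B_0$. The only point that deserves a word of care is that one must invoke the $\epsilon=0$ cases of the coercivity of $A_0$ and of the compact embedding in Lemma~\ref{lemma:E}, both of which are stated there; since the embedding constant for $E_0 \subset\subset L^2(\R^n)$ is a fixed number, the constants $T_1$ and $\beta_1$ are genuine constants (there is no $\epsilon$-uniformity to worry about in the limiting problem). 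Hence the proof can simply read: the argument is identical to that of Lemma~\ref{lemma:Ebound}, replacing $\Omega^\epsilon$, $E_\epsilon$, $A_\epsilon$, $B_\epsilon$ by $\R^n$, $E_0$, $A_0$, $B_0$ and using the corresponding $\epsilon=0$ statements of Lemma~\ref{lemma:E}.
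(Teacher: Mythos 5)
Your proposal is correct and matches the paper exactly: the paper's own proof of this lemma simply states that it is similar to the proof of Lemma \ref{lemma:Ebound} and omits the details, which is precisely the line-by-line transcription you describe.
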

\begin{proof}
The proof is similar to the previous Lemma and will be omitted. 

\end{proof}

\begin{proposition}\label{cor:exis-attr-pert}
For each value of the parameter $\epsilon \in [0, 1)$ equation \eqref{eq:ev-abst} $(\eqref{eq:ev-abst-lim}$ if $\epsilon=0)$ generates a nonlinear $C^0$-semigroup $\{T_\epsilon(t)\}_{t\geq 0}$ in the space $L^2(\Omega^\epsilon)$ $(L^2(\R^n)$ if $\epsilon=0)$ defined by $T_\epsilon(t)u^\epsilon_0:= u^\epsilon(t;u^\epsilon_0)$ $(T_0(t)u_0:= u(t;u_0))$ which has global attractor $\mathcal{A}_\epsilon$. 
\end{proposition}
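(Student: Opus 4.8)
The plan is to verify the two standard ingredients of the theory of global attractors: existence of a continuous semigroup, and existence of a bounded absorbing set together with the asymptotic compactness of the semigroup.

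\textbf{Step 1: the semigroup.} From Proposition~\ref{teo:Brezis} every initial datum $u^\epsilon_0 \in L^2(\Omega^\epsilon)$ yields a unique global solution $u^\epsilon(\cdot,u^\epsilon_0)$, so the formula $T_\epsilon(t)u^\epsilon_0 := u^\epsilon(t;u^\epsilon_0)$ is well defined and, by uniqueness, has the semigroup property $T_\epsilon(t+s)=T_\epsilon(t)T_\epsilon(s)$ with $T_\epsilon(0)=I$. Continuity in $t$ is immediate since $u^\epsilon \in W^{1,1}(0,\infty;L^2(\Omega^\epsilon)) \subset C([0,\infty),L^2(\Omega^\epsilon))$; continuity with respect to the initial datum follows from the monotonicity of $A_\epsilon$ and the Lipschitz continuity of $B_\epsilon$: subtracting the equations for two solutions $u^\epsilon,v^\epsilon$, pairing with $u^\epsilon - v^\epsilon$ in $\langle\cdot,\cdot\rangle_\epsilon$, using $\langle A_\epsilon u^\epsilon - A_\epsilon v^\epsilon, u^\epsilon - v^\epsilon\rangle \geq 0$ and Cauchy--Schwarz on the nonlinearity gives $\frac{d}{dt}|||u^\epsilon - v^\epsilon|||^2_{L^2(\Omega^\epsilon)} \leq 2L\,|||u^\epsilon - v^\epsilon|||^2_{L^2(\Omega^\epsilon)}$, hence by Gronwall $\{T_\epsilon(t)\}_{t\geq 0}$ is a $C^0$-semigroup. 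The same argument with $A_0$, $B_0$ works for $\epsilon = 0$.

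\textbf{Step 2: absorbing set and compactness.} Lemma~\ref{lemma:l2bound} provides a ball in $L^2(\Omega^\epsilon)$ that absorbs all bounded sets (indeed all orbits) after a time $T$; Lemma~\ref{lemma:Ebound} upgrades this to show that the ball $\mathcal{B}_\epsilon := \{u : \|u\|_{E_\epsilon} \leq \beta_1\}$ in the weighted Sobolev space $E_\epsilon$ is a bounded absorbing set for $\{T_\epsilon(t)\}$, and likewise $\mathcal{B}_0 \subset E_0$ for $\{T_0(t)\}$ via Lemma~\ref{lemma:Ebound-lim}. The crucial point is that, by Lemma~\ref{lemma:E}, the embedding $E_\epsilon \subset\subset L^2(\Omega^\epsilon)$ is compact, so $\mathcal{B}_\epsilon$ is a \emph{compact} absorbing set in the phase space $L^2(\Omega^\epsilon)$; this is precisely what the hypothesis \eqref{eq:a0} on $a$ was designed to buy on the unbounded domain. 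Having a compact absorbing set, the classical theorem (e.g. Temam, \cite{Teman:88}, Theorem I.1.1) applies and yields the global attractor $\mathcal{A}_\epsilon = \bigcap_{s\geq 0}\overline{\bigcup_{t\geq s}T_\epsilon(t)\mathcal{B}_\epsilon}$, with $\mathcal{A}_\epsilon \subset \mathcal{B}_\epsilon$; the limiting case $\epsilon = 0$ is identical using $E_0 \subset\subset L^2(\R^n)$.

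\textbf{Main obstacle.} The only genuinely delicate ingredient has already been isolated into Lemma~\ref{lemma:E}: the compactness of $E_\epsilon \hookrightarrow L^2(\Omega^\epsilon)$ on an \emph{unbounded} thin domain, which fails for the plain Sobolev embedding and is recovered only because of the coercive weight $a$ satisfying \eqref{eq:a0}. Granting that lemma, the proof of the present proposition is a routine assembly of Step~1 and Step~2; I would present it essentially as the two paragraphs above, citing \cite{Teman:88} for the abstract existence criterion and Lemmas~\ref{lemma:l2bound}--\ref{lemma:Ebound-lim} for the dissipative estimates.
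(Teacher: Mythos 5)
Your proposal is correct and follows essentially the same route as the paper: the semigroup comes from Proposition~\ref{teo:Brezis}, the absorbing set from Lemma~\ref{lemma:Ebound} (resp.\ Lemma~\ref{lemma:Ebound-lim}), compactness from the embedding $E_\epsilon \subset\subset L^2(\Omega^\epsilon)$ of Lemma~\ref{lemma:E}, and the conclusion from Temam's abstract existence theorem. The only (harmless) difference is that you spell out the continuity with respect to initial data via monotonicity and Gronwall, which the paper leaves implicit in its citation of Proposition~\ref{teo:Brezis}.
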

\begin{proof}
The first part of the Proposition is the statement of the Proposition \ref{teo:Brezis}. For the existence of attractors we just recall that by Lemma \ref{lemma:Ebound} (Lemma \ref{lemma:Ebound-lim} if $\epsilon=0$) the compact set $K_\epsilon = \overline{B^{E_\epsilon}(0,\beta_1)}^{L^2(\Omega^\epsilon)}$ ($K_0 = \overline{B^{E_0}(0,\beta_1)}^{L^2(\R^n)}$ if $\epsilon=0$) absorbs bounded sets of $L^2(\Omega^\epsilon)$ ($L^2(\R^n)$ if $\epsilon=0$) under $T_\epsilon(t)$ ($T_0(t)$ if $\epsilon=0$). By \cite[Chapter I, Theorem 1.1]{Teman:88} there exists a global attractor $\mathcal{A}_\epsilon \subset L^2(\Omega^\epsilon)$ ($\mathcal{A}_0 \subset L^2(\R^n)$ if $\epsilon =0$).

\end{proof}

\subsection{Upper-semicontinuity of pullback attractors}

We start remembering the definition of Hausdorff semi-distance between two subsets $A$ and $B$ of a metric space $(X,d)$:
\[
\operatorname{dist}_H(A,B) = \sup_{a\in A} \inf_{b\in B} d(a,b).
\]
We say that the family $\{{A}_\epsilon\}_{\epsilon} \subset X$ is upper-semicontinuous in $\epsilon=\epsilon_0$, if
$$
\lim_{\epsilon \to \epsilon_0} \operatorname{dist}_H({A}_\epsilon,  {A}_{\epsilon_0}) = 0.
$$

Now we prove that the family of global attractors $\{\mathcal{A}_\epsilon\}_{\epsilon \in [0,1]}$ is upper-semicontinuous in $\epsilon=0$. Since $\mathcal{A}_\epsilon$ and $\mathcal{A}_0$ belongs to different metrics spaces, such comparison is understood in the following sense
\begin{equation}\label{eq:upper-sem}
\lim_{\epsilon \to 0} \operatorname{dist}_{H_\epsilon}(\mathcal{A}_\epsilon, E_\epsilon \mathcal{A}_0)=0,
\end{equation}
where $\displaystyle \operatorname{dist}_{H_\epsilon}(\mathcal{A}_\epsilon, E_\epsilon \mathcal{A}_0):= \sup_{u_\epsilon \in \mathcal{A}_\epsilon} \inf_{v \in \mathcal{A}_0} |||u_\epsilon - E_\epsilon v|||_{L^2(\Omega^\epsilon)}$ and $E_\epsilon: L^2(\R^n) \to L^2(\Omega^\epsilon)$ was defined in \eqref{eq:oper-ext}. In the limiting case, for $A,B \subset L^2(\R^n)$ we will write  $\displaystyle \operatorname{dist}_{H_0}(A,B ):= \sup_{u \in A} \inf_{v \in B} |||u - v|||_{L^2(\R^n)}$.

First, if $u^\epsilon =u^\epsilon(\cdot, u^\epsilon_0)$ and $u =u(\cdot, u_0)$ are solutions of \eqref{eq:ev-abst} and \eqref{eq:ev-abst-lim} respectivelly, let be $w^\epsilon =u^\epsilon - E_\epsilon u$.  Thus $w^\epsilon_t  +  A_\epsilon u^\epsilon - E_\epsilon A_0 u = B_\epsilon(u^\epsilon) - E_\epsilon B(u)$. 

Since $a \geq 1$, it follows from Tartar's inequality the existence of $\alpha >0$ such that
\begin{align*}
\langle A_\epsilon u^\epsilon - E_\epsilon A_0 u^0, w^\epsilon \rangle_\epsilon & =  \langle  |\nabla u^\epsilon|^{p-2} \nabla u^\epsilon - | E_\epsilon \nabla u^0|^{p-2}  E_\epsilon \nabla u^0, \nabla w^\epsilon \rangle_\epsilon + \langle a |u^\epsilon |^{p-2}u^\epsilon - E_\epsilon \bar{a}| E_\epsilon u^0|^{p-2}  E_\epsilon u^0, w^\epsilon  \rangle_\epsilon \\
& =  \langle  |\nabla u^\epsilon|^{p-2} \nabla u^\epsilon - |\nabla E_\epsilon u^0|^{p-2} \nabla E_\epsilon u^0, \nabla w^\epsilon \rangle_\epsilon + \langle a (|u^\epsilon |^{p-2}u^\epsilon - |E_\epsilon u^0 |^{p-2}E_\epsilon u^0)  \\
 & \quad + (a - E_\epsilon \bar{a}) |E_\epsilon u^0|^{p-2} E_\epsilon u^0, w^\epsilon  \rangle_\epsilon \\
& \geq \alpha ( ||| \nabla w^\epsilon |||_{L^2(\Omega^\epsilon)}^p + ||| w^\epsilon |||_{L^2(\Omega^\epsilon)}^p ) +  \langle (a - E_\epsilon \bar{a}) |E_\epsilon u^0|^{p-2} E_\epsilon u^0, w^\epsilon  \rangle_\epsilon  .
\end{align*}

Therefore by Hölder's inequality 

\begin{align*}
\frac12 \frac{d}{dt} ||| w^\epsilon |||^2_{L^2(\Omega^\epsilon)}  & \leq  -  \epsilon^{-1} \int_{\Omega^\epsilon} (a_\epsilon - E_\epsilon \bar{a}) |E_\epsilon u^0|^{p-2} E_\epsilon u^0 w^\epsilon \, dxdy + ||| B(u^\epsilon) - E_\epsilon B(u^0) |||_{L^2(\Omega^\epsilon)} ||| w^\epsilon |||_{L^2(\Omega^\epsilon)} \\
& \leq \| a - E_\epsilon \bar{a} \|_{L^\infty (\Omega^\epsilon) } \epsilon^{-1} \int_{\Omega^\epsilon} \left( | E_\epsilon u^0 |^p + | E_\epsilon u^0 |^{p-1}  |u^\epsilon |  \right) dx  + L ||| w^\epsilon |||_{L^2 (\Omega^\epsilon)}^2 \\
& \leq \| a - E_\epsilon \bar{a} \|_{L^\infty (\Omega^\epsilon) }  \left( ||| E_\epsilon  u^0 |||_{L^p (\Omega^\epsilon)}^p + ||| E_\epsilon u^0 |||_{L^p (\Omega^\epsilon)}^{p-1}  ||| u^\epsilon |||_{L^p (\Omega^\epsilon)}  \right) + L ||| w^\epsilon |||_{L^2 (\Omega^\epsilon)}^2
\end{align*}

The uniform estimates given by Lemma \ref{lemma:Ebound} lead to
$$
\frac12 \frac{d}{dt} ||| w^\epsilon |||^2_{L^2(\Omega^\epsilon)} \leq M \| a - E_\epsilon \bar{a} \|_{L^\infty (\Omega^\epsilon) } + L ||| w^\epsilon |||_{L^2 (\Omega^\epsilon)}^2,
$$
in compact subsets of $\mathbb{R}$. Integrating this last inequality from $0$ to $t$, we obtain
$$
||| w^\epsilon |||^2_{L^2(\Omega^\epsilon)} \leqslant ||| u^\epsilon_0 - E_\epsilon u_0 |||_{L^2 (\Omega^\epsilon)}^2 + 2Mt \| a - E_\epsilon \bar{a} \|_{L^\infty (\Omega^\epsilon) } + 2L \int_0^t ||| w^\epsilon (s) |||^2_{L^2(\Omega^\epsilon)} \, ds .
$$

Hence, by Gronwall's Inequality

\begin{equation}\label{eq:continuity} 
||| w^\epsilon |||^2_{L^2(\Omega^\epsilon)} \leqslant \tilde{M}\left( ||| u^\epsilon_0 - E_\epsilon u_0 |||_{L^2 (\Omega^\epsilon)}^2 + \| a - E_\epsilon \bar{a} \|_{L^\infty (\Omega^\epsilon) } \right),
\end{equation}
in compact subsets of $\mathbb{R}$. 

From this discussion we derive the following Lemma

\begin{lemma}
Let $\{T_\epsilon(t)\}_{t\geqslant 0}$ and $\{T(t)\}_{t\geqslant 0}$ be the semigroups generated by the problems \eqref{eq:ev-abst} and \eqref{eq:ev-abst-lim} respectivelly. If $|||u^\epsilon_0 -E_\epsilon u_0|||_{L^2(\Omega^\epsilon)} \to 0$ then $||| T_\epsilon(t) u^\epsilon_0- E_\epsilon T(t)u^0 |||_{L^2(\Omega^\epsilon)} \to 0 $ uniformly for $t$ in compact subsets of $\R$.
\end{lemma}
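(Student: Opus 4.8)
The plan is to observe that the statement is little more than a reformulation of the a priori bound \eqref{eq:continuity} obtained in the discussion immediately above, so the proof reduces to checking that the two terms on its right-hand side vanish in the limit and that the prefactor remains controlled on compact time intervals.

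First I would put $u^\epsilon(\cdot)=T_\epsilon(\cdot)u^\epsilon_0$ and $u(\cdot)=T(\cdot)u_0$, so that the difference $w^\epsilon(t)=T_\epsilon(t)u^\epsilon_0-E_\epsilon T(t)u_0$ is exactly the function $w^\epsilon=u^\epsilon-E_\epsilon u$ for which \eqref{eq:continuity} was established. Fixing a compact interval $[0,\tau]\subset\R$, \eqref{eq:continuity} then reads
$$
|||T_\epsilon(t)u^\epsilon_0-E_\epsilon T(t)u_0|||^2_{L^2(\Omega^\epsilon)}\ \le\ \tilde{M}\left(|||u^\epsilon_0-E_\epsilon u_0|||^2_{L^2(\Omega^\epsilon)}+\|a-E_\epsilon\bar a\|_{L^\infty(\Omega^\epsilon)}\right),\qquad t\in[0,\tau],
$$
with $\tilde{M}=\tilde{M}(\tau)$.

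Next I would dispatch the second term using the hypotheses on $a$: since $(E_\epsilon\bar a)(x,y)=a(x,0)$ and every $(x,y)\in\Omega^\epsilon$ satisfies $0<y<\epsilon g(x)\le\epsilon\alpha_2$, uniform continuity of $a$ yields $\|a-E_\epsilon\bar a\|_{L^\infty(\Omega^\epsilon)}\le\omega_a(\epsilon\alpha_2)\to 0$ as $\epsilon\to0$, where $\omega_a$ is a modulus of continuity of $a$. The first term tends to $0$ by assumption. Finally, because $|||E_\epsilon u_0|||_{L^2(\Omega^\epsilon)}=|||u_0|||_{L^2(\R^n)}$ (a one-line Fubini computation), the family $\{u^\epsilon_0\}$ is bounded in $L^2(\Omega^\epsilon)$ uniformly in $\epsilon$; feeding this into the energy estimates behind Lemmas \ref{lemma:l2bound} and \ref{lemma:Ebound} shows that the solutions stay bounded in $E_\epsilon$ uniformly in $\epsilon$ for $t\in[0,\tau]$, hence $\tilde{M}(\tau)$ can be chosen independent of $\epsilon$. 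Combining the three facts, the right-hand side above tends to $0$ uniformly in $t\in[0,\tau]$, which is the assertion.

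The only point requiring genuine care is this last uniformity of $\tilde{M}$: Lemmas \ref{lemma:l2bound}--\ref{lemma:Ebound} are stated for $t$ beyond an absorbing time, so on the remaining piece $[0,T_1]$ one must re-run the same Gronwall-type arguments directly from the uniformly bounded data $u^\epsilon_0$ rather than from an absorbing set; this is routine but should be made explicit. Everything else is bookkeeping with the Gronwall inequality already performed before the statement.
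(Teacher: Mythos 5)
Your proposal is correct and follows essentially the same route as the paper, which derives the lemma directly from the Gronwall estimate \eqref{eq:continuity} together with the convergence $\|a-E_\epsilon\bar a\|_{L^\infty(\Omega^\epsilon)}\to 0$ coming from the uniform continuity of $a$ and the fact that $0<y<\epsilon g(x)\le\epsilon\alpha_2$ on $\Omega^\epsilon$. Your added remarks on the $\epsilon$-uniformity of $\tilde M$ and on extending the solution bounds to the initial interval $[0,T_1]$ are points the paper leaves implicit, and making them explicit only strengthens the argument.
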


\begin{corollary}
The family of global attractors $\{ \mathcal{A}_\epsilon \}_{\epsilon \in [0,1]}$ is upper-semicontinuous in $\epsilon = 0$.
\end{corollary}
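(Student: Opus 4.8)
The plan is to run the classical perturbation argument for upper semicontinuity of attractors, adapted to the fact that the phase spaces $L^2(\Omega^\epsilon)$ vary with $\epsilon$ and are tied to $L^2(\R^n)$ only through the operators $M_\epsilon$ and $E_\epsilon$. I would argue by contradiction: if \eqref{eq:upper-sem} failed there would be $\delta>0$, a sequence $\epsilon_k\downarrow 0$ and points $u_{\epsilon_k}\in\mathcal{A}_{\epsilon_k}$ with $\inf_{v\in\mathcal{A}_0}|||u_{\epsilon_k}-E_{\epsilon_k}v|||_{L^2(\Omega^{\epsilon_k})}\geq\delta$ for all $k$. Fixing once and for all a large time $T>0$ to be chosen at the end, the invariance $T_{\epsilon_k}(T)\mathcal{A}_{\epsilon_k}=\mathcal{A}_{\epsilon_k}$ produces $v_{\epsilon_k}\in\mathcal{A}_{\epsilon_k}$ with $T_{\epsilon_k}(T)v_{\epsilon_k}=u_{\epsilon_k}$, and by Proposition \ref{cor:exis-attr-pert} together with Lemma \ref{lemma:Ebound} the sequence $(v_{\epsilon_k})$ is bounded in $E_{\epsilon_k}$ by the $\epsilon$-independent constant $\beta_1$.

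The heart of the proof is to realise the $v_{\epsilon_k}$'s, up to a small error, as extensions $E_{\epsilon_k}\xi_0$ of a single element $\xi_0\in L^2(\R^n)$ lying in a fixed bounded set. First one checks that $M_\epsilon$ maps $E_\epsilon$ into $E_0$ with norm bounded uniformly in $\epsilon$ (the $L^p$ part is the computation already displayed for $M_\epsilon$; the bound for $\nabla_x(M_\epsilon u)$ uses $g\in C^2$ and $0<\alpha_1\le g\le\alpha_2$, cf. \cite{CSS:13,S:13a}). Hence $(M_{\epsilon_k}v_{\epsilon_k})$ is bounded in $E_0$, and by the compact embedding $E_0\subset\subset L^2(\R^n)$ of Lemma \ref{lemma:E} a subsequence (not relabelled) satisfies $M_{\epsilon_k}v_{\epsilon_k}\to\xi_0$ in $L^2(\R^n)$, with $\xi_0$ in the bounded set $\mathcal{B}:=\overline{B^{E_0}(0,C\beta_1)}$. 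Second, and this is the key comparison, one uses a Poincar\'e inequality in the thin direction: since $v-E_\epsilon M_\epsilon v$ has zero mean on each segment $\{x\}\times(0,\epsilon g(x))$, its $L^2(\Omega^\epsilon;\rho_\epsilon)$-norm is controlled by $\epsilon$ times a norm of $\partial_y v$, and, exploiting the weighted structure of $E_\epsilon$ and hypothesis \eqref{eq:a0} to absorb the rescaling factor $\epsilon^{-1}$, this yields $|||v_{\epsilon_k}-E_{\epsilon_k}M_{\epsilon_k}v_{\epsilon_k}|||_{L^2(\Omega^{\epsilon_k})}\to 0$. Combining the two facts with the isometry $|||E_\epsilon w|||_{L^2(\Omega^\epsilon)}=|||w|||_{L^2(\R^n)}$ gives $|||v_{\epsilon_k}-E_{\epsilon_k}\xi_0|||_{L^2(\Omega^{\epsilon_k})}\to 0$.

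From here the argument closes quickly. Applying the preceding Lemma (continuity of the semigroups along extensions) with initial data $v_{\epsilon_k}$ and $\xi_0$ and our fixed $T$, we get $|||u_{\epsilon_k}-E_{\epsilon_k}T(T)\xi_0|||_{L^2(\Omega^{\epsilon_k})}=|||T_{\epsilon_k}(T)v_{\epsilon_k}-E_{\epsilon_k}T(T)\xi_0|||_{L^2(\Omega^{\epsilon_k})}\to 0$. On the other hand $\mathcal{B}$ is a bounded subset of $L^2(\R^n)$, so the attraction property of $\mathcal{A}_0$ (Proposition \ref{cor:exis-attr-pert}) gives $\operatorname{dist}_{H_0}(T(T)\mathcal{B},\mathcal{A}_0)\to 0$ as $T\to\infty$; one chooses $T$ at the start so that $\operatorname{dist}_{H_0}(T(T)\mathcal{B},\mathcal{A}_0)<\delta/2$. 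Then, since $T(T)\xi_0\in T(T)\mathcal{B}$, for $k$ large
\[
\inf_{v\in\mathcal{A}_0}|||u_{\epsilon_k}-E_{\epsilon_k}v|||_{L^2(\Omega^{\epsilon_k})}\leq |||u_{\epsilon_k}-E_{\epsilon_k}T(T)\xi_0|||_{L^2(\Omega^{\epsilon_k})}+\operatorname{dist}_{H_0}(T(T)\mathcal{B},\mathcal{A}_0)<\delta,
\]
contradicting the choice of $u_{\epsilon_k}$, which proves \eqref{eq:upper-sem} and hence the corollary.

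I expect the main obstacle to be precisely the comparison estimate $|||v-E_\epsilon M_\epsilon v|||_{L^2(\Omega^\epsilon)}\to 0$ for families bounded in $E_\epsilon$ (equivalently, the ``collective compactness'' of $\{\mathcal{A}_\epsilon\}$ relative to the operators $E_\epsilon$): on a bounded thin domain this is a routine Poincar\'e inequality in the thin variable, but on the unbounded $\Omega^\epsilon$ the rescaled measure $\rho_\epsilon=\epsilon^{-1}|\cdot|$ forces one to control an $L^2$ rather than an $L^p$ vertical fluctuation over a domain of infinite measure, which is where the integrability hypothesis \eqref{eq:a0} and the $\epsilon$-uniform estimates of Lemma \ref{lemma:E} must be brought to bear. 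The remaining ingredients --- invariance of the attractors, the continuity Lemma, the uniform $E_\epsilon$-bound of Lemma \ref{lemma:Ebound}, and the attraction property of $\mathcal{A}_0$ --- are standard once this comparison is in place.
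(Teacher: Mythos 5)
Your argument is correct in outline and rests on the same three pillars as the paper's own proof: invariance of $\mathcal{A}_\epsilon$ under $T_\epsilon(T)$, the finite-time continuity estimate \eqref{eq:continuity}, and attraction by $\mathcal{A}_0$ of a fixed bounded set containing $\bigcup_\epsilon M_\epsilon\mathcal{A}_\epsilon$ (supplied by Lemma \ref{lemma:Ebound}). The differences are mostly organizational. The paper argues directly rather than by contradiction: for each $\xi_\epsilon\in\mathcal{A}_\epsilon$ it compares $T_\epsilon(T)\xi_\epsilon$ with $E_\epsilon T_0(T)M_\epsilon\xi_\epsilon$, letting the limit initial datum $M_\epsilon\xi_\epsilon$ move with $\epsilon$; since \eqref{eq:continuity} is a uniform estimate this is legitimate, and it makes your subsequence scaffolding and the extraction of a single limit $\xi_0$ via the compact embedding $E_0\subset\subset L^2(\R^n)$ unnecessary --- that compactness step buys you nothing here and can be deleted. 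What your write-up does better is to isolate the one nontrivial ingredient the published proof leaves entirely implicit: to invoke \eqref{eq:continuity} along the attractors one must know that $\sup_{\xi_\epsilon\in\mathcal{A}_\epsilon}|||\xi_\epsilon-E_\epsilon M_\epsilon\xi_\epsilon|||_{L^2(\Omega^\epsilon)}\to 0$, which is exactly your thin-direction Poincar\'e estimate; the paper simply asserts the resulting $\delta/2$ bound.

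One caution on that key step, since you flag it yourself: your sketch controls the $L^2(\Omega^\epsilon;\rho_\epsilon)$ fluctuation by $\epsilon^2$ times $\epsilon^{-1}\int_{\Omega^\epsilon}|\partial_y v|^2$, but Lemma \ref{lemma:Ebound} only bounds $\epsilon^{-1}\int_{\Omega^\epsilon}|\nabla v|^p$ with $p>2$, and on the infinite-measure set $\Omega^\epsilon$ an $L^p$ gradient bound does not yield an $L^2$ one; moreover \eqref{eq:a0} weights $|u|^p$, not $|\nabla u|^p$, so it cannot absorb this directly as you suggest. A route that does close is to run the Poincar\'e inequality in $L^p$, giving $|||v-E_\epsilon M_\epsilon v|||_{L^p(\Omega^\epsilon)}\le C\epsilon$, and then pass to $L^2$ by splitting $\Omega^\epsilon$ into $\{a\le R\}$ (which has finite measure by \eqref{eq:a0}, so H\"older against $|\{a\le R\}|$ applies) and $\{a>R\}$ (where one uses H\"older against the weight $a^{-2/(p-2)}$, the uniform bound on $\epsilon^{-1}\int_{\Omega^\epsilon}a|v|^p$, and the smallness of the tail of the integral in \eqref{eq:a0}). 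With that estimate written out, your version is actually more complete than the one in the paper.
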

\begin{proof}
Given $\delta >0$ let $T>0$ be such that $\operatorname{dist}_{H_0}(T(t)B,\mathcal{A}_0) < \frac{\delta}{2}$, for all $t \geq T$, where $\displaystyle B \supset \bigcup_{\epsilon \in [0,1)}  M_\epsilon \mathcal{A}_\epsilon$ is a bounded set (whose existence is guaranteed by Lemma \ref {lemma:Ebound}).

Now from \eqref{eq:continuity}, there exists $\epsilon_0 >0$ such that
\[
 \sup_{\xi_\epsilon \in \mathcal{A}_\epsilon}
|||T_\epsilon(t)\xi_\epsilon - E_\epsilon T_0(t)M_\epsilon\xi_\epsilon |||_{L^2(\Omega^\epsilon)}
<  \frac{\delta}{2},
\]
for all $\epsilon < \epsilon_0$. From the invariance of $\mathcal{A}_\epsilon$ under $T_\epsilon(t)$, $\epsilon \in [0,1)$, we obtain
\begin{align*}
\operatorname{dist}_{H_\epsilon}(\mathcal{A}_\epsilon,E_\epsilon \mathcal{A}_0)  & \leqslant \operatorname{dist}_{H_\epsilon}(T_\epsilon(t) \mathcal{A}_\epsilon,E_\epsilon T_0(t)M_\epsilon\mathcal{A}_\epsilon)  + \operatorname{dist}_{H_\epsilon}(E_\epsilon T_0(t) M_\epsilon \mathcal{A}_\epsilon,
E_\epsilon T_0(t)\mathcal{A}_0) \\
&  = \sup_{\xi_\epsilon \in \mathcal{A}_{\epsilon}}
 \operatorname{dist}_{H_\epsilon}(T_\epsilon(t)\xi_\epsilon,
 E_\epsilon T_0(t)M_\epsilon \xi_\epsilon)
 + \operatorname{dist}_{H_0}(T_0(t) M_\epsilon \mathcal{A}_\epsilon,
 \mathcal{A}_0) <  \frac{\delta}{2} +  \frac{\delta}{2},
  \end{align*}
which proves the upper-semicontinuity of the family of attractors.

\end{proof}


\bigskip \par\noindent {\bf Acknowledgements.} This work was partially supported by FAPESP $\#2012/06753-8$, FUNDUNESP $\#0135812$ and FUNDUNESP-PROPe $\#0019/008/13$, Brazil.


\end{document}